\documentclass[11pt,a4paper]{article}

\usepackage[T1]{fontenc}
\usepackage[utf8]{inputenc}
\usepackage{lmodern}
\usepackage{microtype}
\usepackage{amsmath,amssymb,amsthm,mathtools}
\usepackage[left=1.02in,right=1.02in,top=0.86in,bottom=0.92in]{geometry}
\usepackage[hidelinks]{hyperref}
\usepackage{authblk}

\hypersetup{
  pdftitle={Counterexamples to Dujella's conjecture on integral points on the elliptic curve attached to a Diophantine triple},
  pdfauthor={Ana Jurasic and Matej Jurasic}
}

\newtheorem{theorem}{Theorem}[section]
\newtheorem{lemma}[theorem]{Lemma}
\newtheorem{proposition}[theorem]{Proposition}
\newtheorem{corollary}[theorem]{Corollary}
\theoremstyle{remark}
\newtheorem{remark}[theorem]{Remark}

\newcommand{\Z}{\mathbb Z}
\newcommand{\Q}{\mathbb Q}
\newcommand{\legendre}[2]{\left(\frac{#1}{#2}\right)}

\allowdisplaybreaks
\title{Counterexamples to Dujella's conjecture on integral points on the elliptic curve attached to a Diophantine triple}
\author[1]{Ana Jurasi\'{c}}
\author[2]{Matej Jurasi\'{c}}
\affil[1]{Faculty of Mathematics, University of Rijeka,
Radmile Matej\v{c}i\'{c} 2, HR-51000 Rijeka, Croatia\\
\texttt{ajurasic@math.uniri.hr}}
\affil[2]{University of Zagreb, Faculty of Electrical Engineering and Computing,
Unska 3, HR-10000 Zagreb, Croatia\\
\texttt{matej.jurasic@fer.hr}}
\date{}

\begin{document}

\maketitle
\vspace{-1.5em}

\begin{abstract}
A set $\{a,b,c\}$ of distinct positive integers is called a Diophantine triple if
$ab+1$, $ac+1$, and $bc+1$ are perfect squares. Dujella formulated the conjecture that the only integral $x$-coordinates on
the attached elliptic curve $$y^2=(ax+1)(bx+1)(cx+1)$$ are $0,d_-,d_+$, together
with $-1$ when the triple contains $1$, where $d_{\pm}=a+b+c+2abc
 \pm2\sqrt{(ab+1)(ac+1)(bc+1)}$.  The weaker question was stated as
Problem~4.8 in Dujella's list of open problems: must every integral point
with $x\ne-1$ make $ax+1$, $bx+1$, and $cx+1$ all perfect squares?  We construct infinitely many triples
$\{5,115,c\}$ admitting an integral point with $x\ne-1$ for which all
three factors are nonsquares.
\end{abstract}

\noindent\textbf{Keywords.} Diophantine triples; elliptic curves; integral points on elliptic curves;
generalized Pell equations; quadratic nonresidues.

\noindent\textbf{2020 Mathematics Subject Classification.} 11D09, 11G05.

\section{Introduction}

A set $\{a_1,\ldots,a_m\}$ of distinct positive integers is called a
\emph{Diophantine $m$-tuple} if $a_i a_j+1$ is a perfect square whenever
$i\ne j$.  The study of such sets has a long history, and their extension
problems are closely connected with elliptic curves (see, for example,
\cite{DujellaBook,Dujella2001}). If $\{a,b,c\}$ is a Diophantine triple, then 
$a,b,c$ are distinct positive integers, so the cubic polynomial $f(X)=(aX+1)(bX+1)(cX+1)$ has the three distinct rational roots. Hence, the curve
\begin{equation}\label{eq:curve}
 E_{a,b,c}:\qquad y^2=(ax+1)(bx+1)(cx+1)
\end{equation}is a nonsingular cubic with a rational point, for instance $(0,1)$, and therefore an elliptic curve over $\Q$. Every positive integer $d$ for which $\{a,b,c,d\}$ is a
Diophantine quadruple yields an integral point on \eqref{eq:curve} with
$x=d$ because each of $ad+1$, $bd+1$, and $cd+1$ is then a perfect square.
Conversely, if a positive integer $x\notin\{a,b,c\}$ makes the three factors in
\eqref{eq:curve} perfect squares, then $\{a,b,c,x\}$ is a Diophantine
quadruple. Conjecture~1.4.5
of Dujella's monograph \cite{DujellaBook} predicts that the only
integral $x$-coordinates on the elliptic curve \eqref{eq:curve} are $0$ and the two regular extension values
\begin{equation}\label{d+-}
 d_{\pm}=a+b+c+2abc
 \pm2\sqrt{(ab+1)(ac+1)(bc+1)},
\end{equation}
together with $x=-1$ when $1\in\{a,b,c\}$.  The weaker question was posed as
Problem~4.8 in Dujella's list of open problems \cite{DujellaOpen}: must every integral point on the elliptic curve \eqref{eq:curve}
with $x\ne-1$ make $ax+1$, $bx+1$, and $cx+1$ all perfect squares?  Partial positive answers
were known for several special families, including triples of the form
$\{1,3,c\}$ in \cite{DujellaPetho} and Fibonacci triples
$\{F_{2n},F_{2n+2},F_{2n+4}\}$ in \cite{Dujella2001}.

The present construction gives a negative answer to the weaker question. As a consequence, the stronger question is also answered in the negative. The construction rests on two main ideas.  First, if we take
\[
 c=abx+a+b,
\]
then we have $cx+1=(ax+1)(bx+1)$. Hence, the product of all three factors in \eqref{eq:curve} is automatically a square, without any
of the individual factors being required to be a square. Second, for the Diophantine pair $\{5,115\}$, the conditions that $5c+1$ and $115c+1$ have to
be squares reduce to the generalized Pell equation
\begin{equation}\label{eq:pell}
 T^2-23S^2=-22.
\end{equation}
Choosing exponents in suitable residue classes along one Pell orbit simultaneously guarantees the required divisibility condition on $c$ and the nonsquareness of all three factors.

\section{The factorization criterion and the Pell equation}

We begin with the elementary identity responsible for the exceptional
integral points on the elliptic curve \eqref{eq:curve}.

\begin{proposition}\label{prop:factorization}
Let $a,b,c$ be distinct positive integers such that
\begin{equation}\label{eq:compatibility}
 c\equiv a+b\pmod{ab}.
\end{equation}
Set $x=\frac{c-a-b}{ab}.$ Then, $cx+1=(ax+1)(bx+1)$
and consequently
\begin{equation}\label{eq:square-product}
 (ax+1)(bx+1)(cx+1)=(cx+1)^2.
\end{equation}
If $\{a,b,c\}$ is a Diophantine triple, then
$\bigl(x,\pm(cx+1)\bigr)$ is an integral point on $E_{a,b,c}$.
\end{proposition}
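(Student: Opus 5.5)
The plan is a direct computation: use the compatibility condition to get an integer $x$, verify the factorization identity by expanding, and then read off the integral point.

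\emph{Integrality of $x$.} By \eqref{eq:compatibility}, $ab$ divides $c-a-b$, so $x=\frac{c-a-b}{ab}$ is an integer. Equivalently, $c=abx+a+b$. Nothing further is needed here; in particular, $x$ may be negative.

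\emph{The identity.} Substituting $c=abx+a+b$ gives
\[
 cx+1=abx^2+(a+b)x+1 .
\]
On the other side,
\[
 (ax+1)(bx+1)=abx^2+(a+b)x+1 .
\]
The two expressions agree, so $cx+1=(ax+1)(bx+1)$. Multiplying both sides by $cx+1$ gives $(ax+1)(bx+1)(cx+1)=(cx+1)^2$, which is \eqref{eq:square-product}.

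\emph{The integral point.} If $\{a,b,c\}$ is a Diophantine triple, then, as noted in the introduction, $E_{a,b,c}$ is an elliptic curve over $\Q$. Since $x\in\Z$, the value $y=\pm(cx+1)$ is an integer. By \eqref{eq:square-product}, $y^2=(ax+1)(bx+1)(cx+1)$, so $\bigl(x,\pm(cx+1)\bigr)$ lies on the curve \eqref{eq:curve} and is an integral point.

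There is no real obstacle. The only points needing care are that integrality of $x$ comes exactly from the congruence \eqref{eq:compatibility}, and that the Diophantine hypothesis serves only to guarantee that \eqref{eq:curve} is an elliptic curve; the identity itself holds for any integers $a,b,c$ satisfying \eqref{eq:compatibility}.
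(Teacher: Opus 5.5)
Your proof is correct and is essentially the paper's argument: integrality of $x$ from the congruence, substitution of $c=abx+a+b$ and expansion to get $cx+1=(ax+1)(bx+1)$, multiplication by $cx+1$, and then reading off the integral point. The paper rechecks nonsingularity directly from the distinct roots $-1/a,-1/b,-1/c$; this shows that distinctness of $a,b,c$, rather than the square conditions in the Diophantine hypothesis, is what makes the curve elliptic, so your closing remark slightly misattributes this.
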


\begin{proof}
Condition \eqref{eq:compatibility} makes $x$ integral and gives
$c=abx+a+b$.  Hence,
\[
 (ax+1)(bx+1)=abx^2+(a+b)x+1=cx+1.
\]
Multiplication by $cx+1$ proves \eqref{eq:square-product}. Since $a,b,c$ are distinct and nonzero, the polynomial $f(X)=(aX+1)(bX+1)(cX+1)$ has three distinct roots. Hence $Y^2=f(X)$
defines a nonsingular cubic curve, and thus an elliptic curve over $\Q$. The identity \eqref{eq:square-product} shows that
$\bigl(x,\pm(cx+1)\bigr)$ is an integral point on this curve.\end{proof}

\begin{remark}\label{rem:criterion}If $x\ne-1$ and at least one of the three factors in
\eqref{eq:square-product} is a nonsquare, the point $\bigl(x,\pm(cx+1)\bigr)$  is a counterexample to
the assertion in Problem~4.8.\end{remark}

We apply Proposition~\ref{prop:factorization} to
\[
 a=5,\qquad b=115,
\]
which form a Diophantine pair because $5\cdot115+1=24^2$.  A positive
integer $c$, distinct from $5$ and $115$, is a Diophantine extension of this pair if
there are positive integers $S,T$ such that
\begin{equation}\label{eq:extension-squares}
 5c+1=S^2,\qquad 115c+1=T^2.
\end{equation}
Eliminating $c$ gives the equation \eqref{eq:pell}.

We now describe all positive solutions of the equation \eqref{eq:pell}.
\begin{lemma}\label{lem:pell-solutions}
Let
\[
    u=24+5\sqrt{23}.
\]
Every pair $(T,S)$ of positive integers satisfying the equation \eqref{eq:pell} has a unique representation in exactly one of the two forms
\begin{equation}\label{eq:first-pell-orbit}
    T+S\sqrt{23}=(1+\sqrt{23})u^k,\qquad k\geq0,
\end{equation}
or
\begin{equation}\label{eq:second-pell-orbit}
    T+S\sqrt{23}=(91+19\sqrt{23})u^k,\qquad k\geq0.
\end{equation}
\end{lemma}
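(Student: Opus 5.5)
We use the standard classification of solutions of a generalized Pell equation $T^2-DS^2=N$ into classes modulo multiplication by the fundamental unit. First we check that $u=24+5\sqrt{23}$ is the fundamental unit of $\Z[\sqrt{23}]$. Indeed $24^2-23\cdot 25=1$, and a search over $s=1,2,3,4$ shows that $23s^2+1$ is not a square for these $s$. (We also record that $x^2-23y^2=-1$ is insoluble, since $-1$ is not a square modulo $23$.) Hence every unit of norm $1$ with positive rational and irrational parts is a power $u^k$.

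Next we run the usual descent. Let $(T,S)$ be a positive solution of \eqref{eq:pell}. Put $\alpha=T+S\sqrt{23}$ and repeatedly replace $\alpha$ by
\[
\alpha\bar u=(24T-115S)+(24S-5T)\sqrt{23}.
\]
Each step keeps the norm equal to $-22$ and strictly decreases the size of $\alpha$. We stop at the first $\alpha_0=T_0+S_0\sqrt{23}$ with $T_0>0$ whose next iterate would have nonpositive $S$-coordinate or fails to be positive. The classical bound (Nagell's) for $N<0$ then confines this fundamental solution to
\[
0<S_0\le \frac{5\sqrt{22}}{\sqrt{2(24-1)}} ,
\]
that is, $S_0\le 4$. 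Checking $S_0=1,\dots,4$ in $T^2=23S^2-22$ gives $S_0=1$, $T_0=1$ and $S_0=3$, $T_0=\sqrt{185}$, which is not an integer. So $1+\sqrt{23}$ is a fundamental solution, and so is its conjugate up to sign.

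The conjugate class is obtained from $-(1-\sqrt{23})u=-(24-115+5\sqrt{23}-24\sqrt{23})=91+19\sqrt{23}$, which has norm $91^2-23\cdot19^2=8281-8303=-22$. Thus every positive solution has the form $(1+\sqrt{23})u^k$ or $(91+19\sqrt{23})u^k$ with $k\ge0$. We still need $k\ge0$ rather than negative $k$. For this we check that $(1+\sqrt{23})\bar u=(24-115)+\dots$ has negative $T$, and that $(91+19\sqrt{23})\bar u=(24\cdot91-115\cdot19)+(24\cdot19-5\cdot91)\sqrt{23}=(2184-2185)+(456-455)\sqrt{23}=-1+\sqrt{23}$ has negative $T$. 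So the two orbits, restricted to positive pairs, start exactly at $k=0$.

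Uniqueness and disjointness come last. Within one orbit, distinct $k$ give distinct sizes, since $u>1$. The two classes are different because the quotient
\[
\frac{91+19\sqrt{23}}{1+\sqrt{23}}=\frac{(91+19\sqrt{23})(1-\sqrt{23})}{-22}
\]
is not in $\Z[\sqrt{23}]$. Equivalently, the classes are distinguished by the criterion $T_1T_2-23S_1S_2\equiv0$, $T_1S_2-T_2S_1\equiv0 \pmod{22}$: here $1\cdot 91-23\cdot19=-346$, which is not divisible by $22$. The main obstacle is stating the descent bound correctly for negative $N$, and making sure positive solutions do not sit at negative $k$. To avoid depending on a cited bound, we can instead argue directly. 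Take a solution with minimal $T$ in its class. Minimality forces $|24T-115S|$ to be at least $T$ or the sign to flip, and this bounds $S$ explicitly by a short inequality using $T\approx\sqrt{23}S$.
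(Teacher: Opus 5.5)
Your overall strategy matches the paper's: Nagell's bound, enumeration of small $S_0$, separating the two classes by a quotient not lying in $\Z[\sqrt{23}]$, and then restricting to positive pairs. Several of your computations are correct: the check that $u$ is fundamental, the identity $(-1+\sqrt{23})u=91+19\sqrt{23}$, and the disjointness check $\frac{91+19\sqrt{23}}{1+\sqrt{23}}=\frac{173+36\sqrt{23}}{11}\notin\Z[\sqrt{23}]$.

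The key step, however, is misapplied. You define $\alpha_0$ by a descent that stops at the last element with $T_0>0$, and then apply Nagell's bound $S_0\le 5\sqrt{22}/\sqrt{46}\approx 3.46$ to it. Nagell's bound concerns a different representative: the element of the class with least nonnegative $S$, where $T_0$ may be negative. In the second class your descent stops at $91+19\sqrt{23}$. You compute yourself that $(91+19\sqrt{23})\bar u=-1+\sqrt{23}$ has $T<0$. But $S_0=19$ violates the bound you claim. Taken literally, your argument would prove that every positive solution lies in the orbit of $1+\sqrt{23}$, i.e.\ that the second family does not exist. The sentence ``and so is its conjugate up to sign'' is the correct output of Nagell's theorem, but it does not follow from what you argued. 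Your closing ``direct'' argument has the same defect if you take ``minimal $T$'' over positive solutions. It only works if you minimize $S$ over the whole class, allowing $T<0$, which is exactly Nagell's normalization.

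There are two clean fixes.
\begin{itemize}
\item Use Nagell's normalization, as the paper does. The fundamental solution of a class satisfies $0<S_0<4$ and $|T_0|<16$. The enumeration gives $S_0=1$, $T_0=\pm1$, hence the representatives $1+\sqrt{23}$ and $-1+\sqrt{23}$, and your computations then finish the proof.
\item Keep your positive descent but bound it correctly. For $\alpha>0$ of norm $-22$ one has $T>0$ if and only if $\alpha>\sqrt{22}$. So the descent stops at some $\alpha_0$ with $\sqrt{22}<\alpha_0\le\sqrt{22}\,u$, and therefore
\[
S_0=\frac{\alpha_0+22/\alpha_0}{2\sqrt{23}}\le 24\sqrt{22/23}<24 .
\]
Checking $S_0\le 23$ in $T^2=23S^2-22$ gives exactly $1+\sqrt{23}$ and $91+19\sqrt{23}$.
\end{itemize}
The second route is self-contained and needs no citation. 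The same equivalence ($T>0$ exactly when $\alpha>\sqrt{22}$) also justifies your claim that no negative exponent gives a positive pair; as written, you only checked $k=-1$.
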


\begin{proof} We apply Nagell's results for generalized Pell equations (see \cite[Theorems~108a and 109]{nagel}). The continued fraction
$\sqrt{23}=[4;\overline{1,3,1,8}]$ shows that the fundamental positive solution of \[ X^2-23Y^2=1 \] is $(X,Y)=(24,5)$. Hence, $u=24+5\sqrt{23}$ is the fundamental unit of norm $1$ in $\Z[\sqrt{23}]$.

Every class of solutions of the equation \eqref{eq:pell} contains a fundamental representative $T_0+S_0\sqrt{23}$ satisfying Nagell's bounds $0<S_0< \frac{5\sqrt{22}}{\sqrt{2(24-1)}} <4$ and $0\leq |T_0| < \sqrt{\frac{(24-1)22}{2}} <16.$ Since $T_0^2=23S_0^2-22,$ and since $23\cdot2^2-22=70$ and $23\cdot3^2-22=185$ are not squares, we have $S_0=1$, $T_0=\pm 1$. Therefore, there are at most two classes, represented by $1+\sqrt{23}$ and  $-1+\sqrt{23}$. Because $(1+\sqrt{23})(1-\sqrt{23})=-22$ is exactly the right-hand side of \eqref{eq:pell}, Nagell's criterion for two solutions to be associated reduces in the present case to $\frac{-1+\sqrt{23}}{1+\sqrt{23}}\in\Z[\sqrt{23}]$. Since $\frac{-1+\sqrt{23}}{1+\sqrt{23}}=\frac{12-\sqrt{23}}{11}\notin\Z[\sqrt{23}],$ there are exactly two classes.

By \cite[Theorem~109]{nagel}, every solution in either class is obtained by
multiplying its fundamental representative by a solution of $X^2-23Y^2=1.$ Hence, all such multipliers are $\pm u^k$, $k\in\Z$. In the first class, the solutions with both coefficients positive are \[ (1+\sqrt{23})u^k,\qquad k\geq0. \] In the second class, the representative $-1+\sqrt{23}$ has negative rational coefficient, while $(-1+\sqrt{23})u=91+19\sqrt{23}.$ Consequently, the solutions with both coefficients positive in this class are \[ (91+19\sqrt{23})u^k,\qquad k\geq0. \] Within each of the two distinct classes the exponent is unique because $u$ has infinite order. Thus every positive solution occurs uniquely in exactly one of the two displayed families.\end{proof}

For our construction it is enough to use the first Pell orbit. Hence, for every integer $k\geq0$, define $T_k,S_k\in\Z$ by \begin{equation}\label{eq:pell-orbit} T_k+S_k\sqrt{23}=(1+\sqrt{23})u^k. \end{equation} Taking norms gives \begin{equation}\label{eq:norm} T_k^2-23S_k^2=-22. \end{equation} Multiplying the equation \eqref{eq:pell-orbit}  by $u$, for $k\geq 0$, gives the recurrence \begin{equation}\label{eq:recurrence} T_{k+1}=24T_k+115S_k,\qquad S_{k+1}=5T_k+24S_k,\qquad T_0=S_0=1. \end{equation} Throughout, congruences in $\Z[\sqrt{23}]$ are
understood coefficientwise. Since $u\equiv-1\pmod5$, we have \begin{equation}\label{eq:S-mod5} S_k\equiv(-1)^k\pmod5. \end{equation} Motivated by $5c+1=S^2$ and by the condition
$c=575x+120$ from Proposition~\ref{prop:factorization}, we define \begin{equation}\label{eq:c-x-definitions} c_k=\frac{S_k^2-1}{5}\in\Z,\qquad x_k=\frac{c_k-120}{575} =\frac{S_k^2-601}{2875}\in\Q. \end{equation} Equations \eqref{eq:norm} and \eqref{eq:c-x-definitions} give \begin{equation}\label{eq:triple-identities} 5c_k+1=S_k^2,\qquad 115c_k+1=T_k^2. \end{equation} We now determine exactly which terms of this Pell orbit make $x_k$ integral.

\begin{lemma}\label{lem:exact-indices} For every integer $k\geq0$, the following conditions are equivalent: \begin{enumerate} \item $x_k\in\Z$; \item $S_k^2\equiv601\pmod{2875}$; \item $k\equiv15\quad\text{or}\quad440\pmod{575}.$ \end{enumerate} \end{lemma}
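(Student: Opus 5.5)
The plan is to separate the three conditions into an easy equivalence and a congruence computation handled prime by prime. The equivalence (1)$\Leftrightarrow$(2) is immediate from \eqref{eq:c-x-definitions}, since $x_k=(S_k^2-601)/2875$. For (2)$\Leftrightarrow$(3), factor $2875=5^3\cdot 23$ and $575=5^2\cdot 23$. By the Chinese remainder theorem, (2) is equivalent to the pair of congruences $S_k^2\equiv 601\equiv 3\pmod{23}$ and $S_k^2\equiv 601\equiv 101\pmod{125}$. I will show that the first holds exactly when $k\equiv 3$ or $15\pmod{23}$, and the second exactly when $k\equiv 15\pmod{25}$. Recombining by CRT then gives $k\equiv 15$ or $440\pmod{575}$: indeed $440\equiv 3\pmod{23}$, $440\equiv 15\pmod{25}$, and $15$ satisfies both residue conditions.

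\emph{Modulo $23$.} Working coefficientwise in $\Z[\sqrt{23}]$, we have $u\equiv 1+5\sqrt{23}\pmod{23}$. Since $(5\sqrt{23})^2=575\equiv0$, the binomial theorem gives $u^k\equiv 1+5k\sqrt{23}\pmod{23}$. Multiplying by $1+\sqrt{23}$ in \eqref{eq:pell-orbit} yields $S_k\equiv 1+5k\pmod{23}$. The square roots of $3$ modulo $23$ are $\pm7$. So $5k\equiv 6$ or $5k\equiv -8\pmod{23}$, and using $5^{-1}\equiv 14\pmod{23}$ this gives $k\equiv 15$ or $k\equiv 3\pmod{23}$.

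\emph{Modulo $125$.} First solve $S^2\equiv101\pmod{125}$. Writing $S=\pm(1+5t)$ and lifting step by step gives exactly $S\equiv\pm 51\pmod{125}$. By \eqref{eq:S-mod5} we have $S_k\equiv(-1)^k\pmod 5$, so it suffices to show $(-1)^kS_k\equiv 51\pmod{125}$ exactly when $k\equiv15\pmod{25}$.

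To compute $S_k$ modulo $125$, write $u=-1+5\alpha$ with $\alpha=5+\sqrt{23}$. Then $u^k=(-1)^k(1-5\alpha)^k$, and modulo $125$ only the terms up to $\alpha^2$ survive:
\[
 (-1)^k u^k\equiv 1-5k\alpha+25\binom{k}{2}\alpha^2 \pmod{125},
\]
where $\alpha^2=48+10\sqrt{23}$. Multiplying by $1+\sqrt{23}$ and taking the $\sqrt{23}$-coefficient gives $(-1)^kS_k$ as an explicit quadratic polynomial in $k$ modulo $125$. This polynomial is periodic in $k$ with period $25$, since $25\binom{k}{2}$ modulo $125$ depends only on $k$ modulo $5$.

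The main obstacle is solving the congruence "quadratic polynomial $\equiv 51\pmod{125}$" and checking that its only solution class is $k\equiv15\pmod{25}$, rather than also picking up a spurious class from the sign ambiguity $\pm51$. I would first reduce the congruence modulo $25$ to pin down $k\bmod 5$. I would then substitute $k=k_0+5j$ and solve the resulting linear congruence for $j\bmod 5$, verifying directly that the value $-51$ (after the sign normalization) yields no solutions. As a sanity check, I would compute $S_{15}$ modulo $125$ from the recurrence \eqref{eq:recurrence} and confirm that it is $\pm 51$.
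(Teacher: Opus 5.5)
Your proposal is correct and follows essentially the same route as the paper. Both split $2875=125\cdot23$ by CRT, use $u\equiv1+5k\sqrt{23}\pmod{23}$ to get $k\equiv3,15\pmod{23}$, and expand $u=-(1-5\alpha)$ binomially modulo $125$ to get $(-1)^kS_k\equiv1-5k+100k^2\pmod{125}$, hence $k\equiv15\pmod{25}$. The one difference is cosmetic: the paper squares to $S_k^2\equiv1-10k+100k^2$ and solves directly, while you extract the roots $\pm51$ first, and your worry about a spurious class is vacuous because $(-1)^kS_k\equiv1\pmod 5$ whereas $-51\equiv4\pmod5$.
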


\begin{proof} The equivalence of the first two conditions follows immediately from \eqref{eq:c-x-definitions}. Since $2875=125\cdot23,$ the second condition is equivalent to \begin{equation}\label{eq:two-integrality-congruences} S_k^2\equiv101\pmod{125}, \qquad S_k^2\equiv3\pmod{23}. \end{equation}

We first consider the congruence modulo $125$. Put $w=5+\sqrt{23}$ and $z=5w.$ Then $u=24+5\sqrt{23}=-1+5(5+\sqrt{23})=-(1-z)$. Since $z=5w$, we have $z^j\equiv0\pmod{125}$ for every $j\geq3$.
Thus, from $u=-(1-z)$ and the binomial theorem, for every $k\geq0$,
\[
u^k\equiv
(-1)^k\big(
1-kz+\tbinom{k}{2} z^2
\big)
\pmod{125}.
\] Equivalently, \[ u^k\equiv (-1)^k\big( 1-5kw+25\tbinom{k}{2}w^2 \big) \pmod{125}. \] The coefficients of $\sqrt{23}$ in $1+\sqrt{23}$, $(1+\sqrt{23})w$ and $(1+\sqrt{23})w^2$ are $1$, $6$, and $58$, respectively. Therefore, \begin{align} S_k &\equiv (-1)^k \big( 1-30k+25\cdot58\tbinom{k}{2} \big)\pmod{125}\notag\\ &\equiv (-1)^k(1-5k+100k^2) \pmod{125}. \label{eq:S-mod125} \end{align} Squaring gives \begin{equation}\label{eq:S2-mod125} S_k^2\equiv1-10k+100k^2\pmod{125}. \end{equation} Hence, $S_k^2\equiv101\pmod{125}$ if and only if  $100k^2-10k-100\equiv0\pmod{125}. $ Dividing by $5$ gives $20k^2-2k-20\equiv0\pmod{25}, $ which is equivalent to $ 10k^2-k-10\equiv0\pmod{25}. $ Hence,  $k=5q$ for some $q\in\Z$, and substitution gives $q\equiv3\pmod5.$ Since every step above is reversible, we get \begin{equation}\label{eq:k-mod25} S_k^2\equiv101\pmod{125} \quad\Longleftrightarrow\quad k\equiv15\pmod{25}. \end{equation}

We next consider congruences modulo $23$. Since $(\sqrt{23})^2=23\equiv0\pmod{23}$ and $u\equiv1+5\sqrt{23}\pmod{23},$ the binomial theorem, for every $k\geq 0$, gives \[ u^k\equiv1+5k\sqrt{23}\pmod{23}. \] Thus $(1+\sqrt{23})u^k\equiv1+(5k+1)\sqrt{23}\pmod{23},$ and hence \[ S_k\equiv5k+1\pmod{23}. \] The two square roots of $3$ modulo $23$ are $\pm7$, and every step above is reversible, so \begin{equation}\label{eq:k-mod23} S_k^2\equiv3\pmod{23} \quad\Longleftrightarrow\quad k\equiv15\quad\text{or}\quad3\pmod{23}. \end{equation}

Combining \eqref{eq:k-mod25} and \eqref{eq:k-mod23}, Chinese remainder theorem gives \[ k\equiv15\pmod{575} \] in the first case and \[ k\equiv440\pmod{575} \] in the second. This proves the lemma. \end{proof}

\begin{remark}\label{rem:second-orbit}
The same computation applied to the second orbit \eqref{eq:second-pell-orbit} shows that its terms give an integral value of $x$ for $k\equiv134$ or $559\pmod{575}$. The corresponding value of $c$ for $k=134$ already has $453$ decimal digits. Hence, among all Diophantine extensions $c$ of $\{5,115\}$ satisfying \eqref{eq:compatibility}, the smallest one is the value $c_{15}$ obtained below.
\end{remark}

\section{The counterexample family}

We now state the main result. \begin{theorem}\label{thm:main} Let $T_k,S_k\in\Z$ be defined by \eqref{eq:pell-orbit} for $k\geq 0$, and put \[ \mathcal A= \{k\in\Z_{\geq0}:k\equiv15\ \text{or}\ 440\pmod{575}\}. \] For every $k\in\mathcal A$, define $c_k,x_k$ by \eqref{eq:c-x-definitions} and put \begin{equation}\label{eq:y-definition} y_k=c_kx_k+1. \end{equation} Then the following statements hold. \begin{enumerate} \item $c_k,x_k,y_k$ are positive integers and $\{5,115,c_k\}$ is a Diophantine triple. \item $(x_k,\pm y_k)$ is an integral point on \begin{equation}\label{eq:family-curve} E_k:\qquad y^2=(5x+1)(115x+1)(c_kx+1). \end{equation} \item Each of $5x_k+1$, $115x_k+1$, and $c_kx_k+1$ is a nonsquare in $\Z$. \end{enumerate} Moreover, the triples and the corresponding counterexamples are pairwise distinct, and the least value of $c_k$ in this family is attained at $k=15$. \end{theorem}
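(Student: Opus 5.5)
Parts~1 and~2 follow quickly from the earlier results, so most of the work is in part~3. For $k\in\mathcal A$, Lemma~\ref{lem:exact-indices} gives $x_k\in\Z$, and $c_k\in\Z$ by \eqref{eq:S-mod5}. Positivity holds because $S_k$ grows with $k$ and $S_{15}^2>601$, so $c_k>120$ and $x_k>0$. Then $y_k=c_kx_k+1>0$. The identities \eqref{eq:triple-identities} show that $\{5,115,c_k\}$ is a Diophantine triple, and $c_k\ne 5,115$ because $c_k>120$. We have $c_k=575x_k+120\equiv 5+115\pmod{575}$, so Proposition~\ref{prop:factorization} applies and gives the integral point $(x_k,\pm y_k)$. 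Distinctness and minimality follow from strict monotonicity of $S_k$, hence of $c_k$ and $x_k$, in $k$; the least index in $\mathcal A$ is $15$.

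For part~3 I would rewrite each factor in terms of $S_k,T_k$. From $2875x_k=S_k^2-601$ we get
\[
25(115x_k+1)=S_k^2-576=S_k^2-24^2 .
\]
Using $T_k^2=23S_k^2-22$ we also get $T_k^2-576=23(S_k^2-26)=23\cdot575(5x_k+1)$, that is,
\[
115^2(5x_k+1)=T_k^2-24^2 .
\]
If $115x_k+1=n^2$, then $S_k^2-(5n)^2=576$ with $n\ge1$. This forces $(S_k-5n)(S_k+5n)=576$, which bounds $S_k\le 289$. Similarly, if $5x_k+1=m^2$, then $T_k^2-(115m)^2=576$ forces $T_k\le 289$. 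But already $S_{15}$ and $T_{15}$ are astronomically larger, and $S_k,T_k$ increase, so neither factor is a square.

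For the third factor, $c_kx_k+1=(5x_k+1)(115x_k+1)$. Since $(115x+1)-23(5x+1)=-22$, the gcd of the two factors divides $22$. So if the product were a square, we would have $5x_k+1=dm^2$ and $115x_k+1=dn^2$ with squarefree $d\mid 22$. The case $d=1$ is excluded above, so it remains to rule out $d\in\{2,11,22\}$. The same substitutions give
\[
S_k^2-24^2=25dn^2,\qquad T_k^2-24^2=115^2dm^2 .
\]
The cases $d=2,22$ should fall to parity: from the recurrence, $S_k,T_k$ are odd (since $T_0=S_0=1$ and $24T+115S$, $5T+24S$ preserve oddness). Hence $x_k=(S_k^2-601)/2875$ is even ($S_k^2\equiv1\pmod 8$ gives $S_k^2-601\equiv0\pmod 8$), so $5x_k+1$ is odd and not divisible by $2$. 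The case $d=11$ is the main obstacle. I would first try a congruence: compute $S_k \bmod 11$ (or modulo a small power of $11$) along the orbit and show that $11\nmid 115x_k+1$ for $k\in\mathcal A$. If that residue depends on $k$ beyond the class mod $575$, I would instead use the equation $n^2-23m^2=-2$ that results from $d=11$. I would then look for a contradiction modulo a prime where $-2$ is not represented by the form, using that $m\equiv$ something forced by $x_k\equiv0\pmod 2$ and the known residues of $x_k$ modulo $5$ and $23$.
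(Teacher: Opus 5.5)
Your proof is correct, and for part~3 it takes a genuinely different route from the paper. The paper uses residue certificates. From $5\mid k$ and $u^5\equiv-1\pmod{11}$ it gets $x_k\equiv4\pmod{11}$, so $5x_k+1\equiv115x_k+1\equiv10$, a nonresidue mod $11$. For $c_kx_k+1$ it brings in the auxiliary prime $2851$ (where $u$ has order $50$), computes $(1+\sqrt{23})u^{15}$ there, and shows by reciprocity that $c_kx_k+1\equiv1235$ is a nonresidue.

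You instead write $25(115x_k+1)=S_k^2-24^2$ and $115^2(5x_k+1)=T_k^2-24^2$, so a square value would express $576$ as a difference of squares with a huge term. You then reduce the third factor to the first two through $\gcd(5x_k+1,115x_k+1)\mid22$ and the squarefree-kernel case split.

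You leave the case $d=11$ as a plan, but it does close. Your first idea works: $5\mid k$ and $u^5\equiv-1\pmod{11}$ give $S_k\equiv\pm1\pmod{11}$, hence $25(115x_k+1)=S_k^2-576\equiv-3\pmod{11}$. More simply, $n^2-23m^2=-2$ has no integer solutions at all because $\legendre{-2}{23}=-1$, so the extra information about $m$ in your fallback is unnecessary.

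With the mod-$23$ closure, none of your ingredients depends on the residue class of $k$. These are the difference-of-squares bound, the parity of $S$ (which holds for every solution of $T^2-23S^2=-22$ by reducing mod $4$), and the mod-$23$ obstruction. So your argument applies verbatim to the second Pell orbit of Remark~\ref{rem:second-orbit} as well. The paper's certificates are quicker to verify, but they are tied to $5\mid k$ and $k\equiv15\pmod{25}$ and would have to be recomputed for other orbits.
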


\begin{proof} Lemma~\ref{lem:exact-indices} gives $x_k\in\Z$ for every $k\in\mathcal A$. The recurrence \eqref{eq:recurrence} shows that $T_k,S_k$ are positive for every $k\geq0$. Moreover, $S_{k+1}=5T_k+24S_k>S_k,$ so $(S_k)_{k\geq0}$ is strictly increasing. The least member of $\mathcal A$ is $15$. Hence, $S_k\geq S_{15}>S_1=29$ for every $k\in\mathcal A$. By \eqref{eq:c-x-definitions}, $c_k>168$ and then $x_k>0$, while $y_k>0$ follows immediately from \eqref{eq:y-definition}. The strict increase of $S_k$ also shows that the values $c_k$ are pairwise distinct. The identities in \eqref{eq:triple-identities}, together with $ 5\cdot115+1=24^2, $ show that $\{5,115,c_k\}$ is a Diophantine triple.

Also, \eqref{eq:c-x-definitions} gives \begin{equation}\label{eq:c-linear} c_k=575x_k+120=5\cdot115x_k+5+115. \end{equation} Therefore, \begin{equation}\label{eq:main-factorization} c_kx_k+1=(5x_k+1)(115x_k+1) \end{equation} and Proposition~\ref{prop:factorization} yields that $(x_k,\pm y_k)$ is an integral point on the elliptic curve \eqref{eq:family-curve}.

It remains to prove that the three factors are nonsquares. Every $k\in\mathcal A$ is divisible by $5$. A direct calculation gives $u^5\equiv-1\pmod{11}$. Hence, $S_k\equiv\pm1\pmod{11}, $ and therefore $ c_k\equiv0\pmod{11}.$ Reducing \eqref{eq:c-linear} modulo $11$ gives $0\equiv3x_k+10\pmod{11},$ so $x_k\equiv4\pmod{11}.$ It follows that \begin{equation}\label{eq:first-two-nonsquares} 5x_k+1\equiv115x_k+1\equiv10\pmod{11}. \end{equation} Since $10$ is a quadratic nonresidue modulo $11$, $5x_k+1$ and $115x_k+1$ are nonsquares in $\Z$.

For the third factor we use the prime $p=2851$, for which $u$ has order $50$ modulo $p$. Repeated multiplication gives $u^5\equiv2385+1778\sqrt{23}\pmod{2851},$ $u^{10}\equiv959+2186\sqrt{23}\pmod{2851},$ $u^{20}\equiv466+1778\sqrt{23}\pmod{2851},$ $u^{25}\equiv-1\pmod{2851}$ and \begin{equation}\label{eq:mod2851-seed} (1+\sqrt{23})u^{15}\equiv852+1227\sqrt{23}\pmod{2851}. \end{equation} By Lemma~\ref{lem:exact-indices}, every $k\in\mathcal A$ satisfies $k\equiv15\pmod{25}.$ Hence, \eqref{eq:mod2851-seed} and $u^{25}\equiv-1\pmod{2851}$ imply $S_k^2\equiv1227^2\equiv201\pmod{2851}.$ Using $5^{-1}\equiv2281\pmod{2851},$  $2875^{-1}\equiv594\pmod{2851},$ and \eqref{eq:c-x-definitions}, we obtain \begin{equation}\label{eq:third-certificate} c_k\equiv40\pmod{2851},\qquad x_k\equiv1884\pmod{2851},\qquad c_kx_k+1\equiv1235\pmod{2851}. \end{equation} Finally, $1235=5\cdot13\cdot19$ and $2851\equiv1\pmod5$, $2851\equiv4\pmod{13}$, $2851\equiv1\pmod{19}$. Since $19\equiv2851\equiv3\pmod4$, quadratic reciprocity gives $\legendre{1235}{2851} = \legendre{5}{2851} \legendre{13}{2851} \legendre{19}{2851} = 1\cdot1\cdot(-1) =-1.$ Thus $1235$ is a quadratic nonresidue modulo $2851$, so $c_kx_k+1$ is also a nonsquare. \end{proof}

\begin{corollary}\label{cor:conjecture}
For every $k\in\mathcal A$ we have $x_k\notin\{-1,0,d_-,d_+\}$. Consequently, Problem~4.8 in \cite{DujellaOpen} has a negative answer and Conjecture~1.4.5 in \cite{DujellaBook} is false.
\end{corollary}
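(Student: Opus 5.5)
The corollary asks us to show that $x_k$ avoids the four values $-1$, $0$, $d_-$, $d_+$. I would exclude each value using Theorem~\ref{thm:main} together with the characterization of $d_\pm$.

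The values $-1$ and $0$ are immediate. By part~1 of Theorem~\ref{thm:main}, $x_k$ is a positive integer, so $x_k\ne0$ and $x_k\ne-1$. Moreover $1\notin\{5,115,c_k\}$, since $c_k>168$ was shown in the proof of the theorem. So the exceptional value $-1$ is not even listed in the conjecture for these triples.

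For $d_\pm$ I would use the classical fact that both $d_+$ and $d_-$ are regular extensions. That is, $ad_\pm+1$, $bd_\pm+1$ and $cd_\pm+1$ are all perfect squares. This follows from the standard identities
\[
ad_\pm+1=\bigl(\sqrt{(ab+1)(ac+1)}\pm\sqrt{bc+1}\,\bigr)^2 / \dots,
\]
or, more precisely, in the usual form $ad_\pm+1=(at\pm rs)^2$, where $r=\sqrt{ab+1}$, $s=\sqrt{ac+1}$ and $t=\sqrt{bc+1}$. The analogous formulas for $b$ and $c$ are $bd_\pm+1=(bs\pm rt)^2$ and $cd_\pm+1=(cr\pm st)^2$. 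I would cite these from \cite{DujellaBook}, or verify them by expanding with $r^2=ab+1$, $s^2=ac+1$, $t^2=bc+1$.

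On the other hand, part~3 of Theorem~\ref{thm:main} says that $5x_k+1$ is a nonsquare. Hence $x_k\ne d_\pm$. This also covers the case $d_-=0$, which is consistent with the previous step.

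Finally, Remark~\ref{rem:criterion} shows that $(x_k,\pm y_k)$ is a counterexample to Problem~4.8. The point is integral with $x_k\ne-1$, and all three factors are nonsquares. Since $x_k\notin\{-1,0,d_-,d_+\}$, it is an integral $x$-coordinate outside the conjectured list, so Conjecture~1.4.5 fails. The only step requiring care is the square identities for $d_\pm$. They are standard, so I would simply cite them with a one-line check.
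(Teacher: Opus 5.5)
Your proposal is correct and essentially matches the paper's proof: positivity of $x_k$ rules out $-1$ and $0$, the identities $ad_\pm+1=(at\pm rs)^2$, $bd_\pm+1=(bs\pm rt)^2$, $cd_\pm+1=(cr\pm st)^2$ make the factors squares at $d_\pm$, and the nonsquareness from Theorem~\ref{thm:main} then excludes $d_\pm$ (using only $5x_k+1$, as you do, already suffices). Drop the garbled first display involving $\sqrt{(ab+1)(ac+1)}\pm\sqrt{bc+1}$, since that expression is not the right identity; the form $(at\pm rs)^2$ you give immediately after is the correct one.
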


\begin{proof}
We have $5\cdot115+1=24^2$ and \eqref{eq:triple-identities}. Using the standard identities for the regular extensions \eqref{d+-},
$ad_\pm+1=(at\pm rs)^2,$ $bd_\pm+1=(bs\pm rt)^2,$ $cd_\pm+1=(cr\pm st)^2,$ where $r^2=ab+1$, $s^2=ac+1$, and $t^2=bc+1$, for the Diophantine  triple $\{5,115,c_k\}$ we obtain
\[
 5d_\pm+1=(5T_k\pm24S_k)^2,\qquad 115d_\pm+1=(115S_k\pm24T_k)^2,\qquad c_kd_\pm+1=(24c_k\pm S_kT_k)^2.
\]
Hence, all three factors in \eqref{eq:family-curve} are squares at $x=d_\pm$, and all three are squares at $x=0$. By Theorem~\ref{thm:main}, none of them is a square at $x=x_k$, and $x_k>0$. Hence $x_k\notin\{-1,0,d_-,d_+\}$, and Remark~\ref{rem:criterion} applies. Therefore, Conjecture~1.4.5 does not hold.
\end{proof}

\section{The least member of the family}

By \eqref{eq:recurrence}, $T_{15}=47\,629\,391\,928\,872\,028\,421\,837\,349$ and
 $S_{15}=9\,931\,414\,749\,943\,516\,928\,215\,699.$ Consequently,
\begin{equation}\label{eq:first-cx}
\begin{aligned}
 c_{15}={}&19\,726\,599\,787\,079\,129\,775\,103\,071\,759\,543\,738\,099\,950\,374\,011\,720,\\
 x_{15}={}&34\,307\,130\,064\,485\,443\,087\,135\,776\,973\,119\,544\,521\,652\,824\,368.
\end{aligned}
\end{equation}
Thus $\{5,115,c_{15}\}$ is a Diophantine triple and
\[
 \bigl(x_{15},\pm(c_{15}x_{15}+1)\bigr)
\]
is an integral point on the associated elliptic curve.  By \eqref{eq:first-two-nonsquares} and \eqref{eq:third-certificate}, none of the three factors is a square.

By \eqref{eq:triple-identities} and $5\cdot115+1=24^2$, we have $\sqrt{(ab+1)(ac+1)(bc+1)}=24S_{15}T_{15}$, so the regular extension values of this triple are $d_\pm=120+1151c_{15}\pm48S_{15}T_{15}$, that is,
\begin{equation}\label{eq:d-values}
\begin{aligned}
 d_-={}&8\,569\,332\,541\,111\,894\,343\,538\,801\,554\,602\,513\,003\,703\,076\,192,\\
 d_+={}&45\,410\,624\,140\,523\,615\,630\,392\,927\,651\,668\,130\,503\,572\,757\,271\,903\,488.
\end{aligned}
\end{equation}
In particular, by \eqref{eq:first-cx} and \eqref{eq:d-values}, $0<d_-<x_{15}<d_+$.

By Remark~\ref{rem:second-orbit}, this is the least member of the family constructed in Theorem~\ref{thm:main} and, more generally, the least one arising from the pair $\{5,115\}$. It is also the explicit counterexample recorded in
Dujella's Errata and addenda \cite{DujellaErrata} and in the updated entry for
Problem~4.8 \cite{DujellaOpen}.

\section*{Acknowledgments}
The first author was supported by the Croatian Science Foundation Grant No.~IP-2022-10-5008 and by the European Union -- NextGenerationEU, project number uniri-iz-25-62-ALGEBRA. The authors thank Andrej Dujella for his valuable advice and support, and for recording the construction in the
companion problem list and on the errata and addenda page for his monograph.

\section*{Declaration on the use of generative AI}

OpenAI's GPT-5.6 Sol model, used in Pro mode through ChatGPT, assisted during
the discovery of the construction.  OpenAI Codex assisted with proof checking
and language editing. The generated candidate parameter family was formally proved by the authors using standard congruence methods and Pell sequence recurrences. The authors take full responsibility for the content.

\end{document}